\numberwithin{equation}{section}
\newtheorem{theorem}{Theorem}[section]
\newtheorem{lemma}[theorem]{Lemma}
\newtheorem{proposition}[theorem]{Proposition}
\newtheorem{corollary}[theorem]{Corollary}
\begin{document}
\title[Engel graphs]{On the strong connectivity of the $2$-Engel graphs of almost simple groups}

\author[F. Dalla Volta]{Francesca Dalla Volta}
\address{Francesca Dalla Volta, Dipartimento di Matematica Pura e Applicata,\newline
 University of Milano-Bicocca, Via Cozzi 55, 20126 Milano, Italy} 
\email{f.dallavolta@unimib.it}

\author[F. Mastrogiacomo]{Fabio Mastrogiacomo}
\address{Fabio Mastrogiacomo, Dipartimento di Matematica ``Felice Casorati", University of Pavia, Via Ferrata 5, 27100 Pavia, Italy} 
\email{fabio.mastrogiacomo01@universitadipavia.it}

\author[P. Spiga]{Pablo Spiga}
\address{Pablo Spiga, Dipartimento di Matematica Pura e Applicata,\newline
 University of Milano-Bicocca, Via Cozzi 55, 20126 Milano, Italy} 
\email{pablo.spiga@unimib.it}
\subjclass[2010]{primary 20F99, 05C25}
\keywords{simple groups; almost simple; Engel elements}        
\thanks{The authors are members of the GNSAGA INdAM research group and kindly acknowledge their support.}
	\maketitle

        \begin{abstract}
The Engel graph of a finite group $G$  is a directed graph encoding the pairs of elements in $G$ satisfying some Engel word. Recent work of Lucchini and the third author shows that, except for a few well-understood cases, the Engel graphs of almost simple groups are strongly connected. In this paper, we give a refinement to this analysis.
        	          \end{abstract}

\section{Introduction}
In this paper we investigate a directed graph introduced by Peter Cameron in~\cite[Section 11.1]{cam}. Let $x$ and $y$ be free generators of a free group of rank $2$. We let $[x,y]:=x^{-1}y^{-1}xy$.  We define recursively $[x,_0y]:=x$ and $[x,_{i+1}y]:=[[x,_{i}y],y],$ for every $i\ge 0$. The word $[x,_ny]$ is the \textit{\textbf{$n^{\mathrm{th}}$ Engel word}}.  

Now, let $G$ be a group and let  $I_n(G)=\{x\in G\mid [x,_ny]=[y,_nx]=1, {\text { for every $y\in G$}}\}$ be  the set of elements of $G$ that are  right and left $n$-Engel. The $n^{\mathrm{th}}$ \textit{\textbf{Engel graph}} $$\Gamma_{n}(G)$$ of $G$ is the directed graph having vertex set $G\setminus I_n(G)$, where $(x,y)$ is declared to be an arc if and only  if $[x,_ny]=1$. Clearly, when $n:=1$, $I_1(G)$ is the center ${\bf Z}(G)$ of $G$ and $\Gamma_1(G)$ is the \textbf{\textit{commuting graph}} of $G$.

A directed graph is \textit{\textbf{strongly connected}} if, for any two vertices, there exists a directed path from the first to the second. A directed graph is \textit{\textbf{connected}} if, for any two vertices, there exists a (not necessarily  directed) path from the first to the second.   The commuting graph is undirected because the commutator word is symmetric in $x$ and $y$, but in general $\Gamma_n(G)$ is directed. Thus, for $n\ge 2$, it is interesting to study the connectivity and the strong connectivity of $\Gamma_n(G)$.  

 There is a natural reason for excluding  the elements of $I_n(G)$ from the vertex set of $\Gamma_n(G)$: if we include elements of $I_n(G)$ in the vertex set, then we define a graph which is trivially strongly connected, because any element of $I_n(G)$ is adjacent to every other vertex. Observe that, for every $n$, $\Gamma_n(G)$ is a subgraph of $\Gamma_{n+1}(G)$; broadly speaking, this means that the family of graphs $(\Gamma_n(G))_n$ becomes denser as $n$ increases.


The first investigation on Engel graphs is in~\cite{DLN}. For instance,  for groups $G$ having trivial hypercenter,~\cite{DLN} reduces the study of the strong connectivity of the Engel graphs for arbitrary groups to the study of the strong connectivity of the Engel graphs of almost simple groups. The strong connectivity of the Engel graphs for almost simple groups is studied in~\cite{LS}. 

In this paper, we give a further contribution to the knowledge on the strong connectivity of the Engel graphs of almost simple groups. Since the commuting graph $\Gamma_1(G)$ is rather special in our family of Engel graphs and since it has received a lot of attention (see~\cite{mp} and the references therein), we will be focusing on the Engel graphs $\Gamma_n(G)$ with $n\ge 2$. We determine, for each almost simple group $G$, the smallest $n\ge 2$ (if it exists) such that $\Gamma_n(G)$ is strongly connected. This answers one of the questions raised in the introductory section of~\cite{LS}.

\begin{theorem}\label{main}
Let $G$ be an almost simple group with socle $L$ and let $n\ge 2$ be the smallest natural number $n$ such that $\Gamma_n(G)$ is strongly connected. Then $n=2$, except when $G$ is in Table~$\ref{table1main}$.\footnote{In Table~\ref{table1main}, we take into account the exceptional isomorphisms between non-abelian simple groups. For example, the group $\mathrm{Alt}(5)$ appears as $\mathrm{PSL}_2(4)$ or as $\mathrm{PSL}_2(5)$. Moreover, $\mathrm{PSL}_3(2)\cong\mathrm{PSL}_2(7)$.}
\end{theorem}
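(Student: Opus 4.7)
The plan is to combine the reduction results of \cite{LS} with a careful case-by-case analysis based on the Classification of Finite Simple Groups, refining the arguments there in order to detect precisely when $\Gamma_2(G)$ is already strongly connected. The starting observation is that $\Gamma_n(G)$ is a subgraph of $\Gamma_{n+1}(G)$ for every $n\ge 1$, so strong connectivity of $\Gamma_2(G)$ automatically gives $n=2$. The main result of \cite{LS} determines those almost simple $G$ for which some $\Gamma_n(G)$ is strongly connected; hence the task splits into two pieces: (i) for each such $G$, decide whether $\Gamma_2(G)$ is already strongly connected, and if not, find the minimal $n\ge 3$ that works; (ii) compare the resulting exceptional list with the entries of Table~\ref{table1main}.

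For (i) I would translate the arc condition into group-theoretic language: $(x,y)$ is an arc of $\Gamma_2(G)$ exactly when $[x,y]\in \mathbf{C}_G(y)$. This makes it natural to look for \emph{hubs} in the graph, that is, vertices $h$ such that most $g\in G\setminus I_2(G)$ satisfy both $[h,g]\in \mathbf{C}_G(g)$ and $[g,h]\in \mathbf{C}_G(h)$. Elements with unusually large centralizers (involutions in the alternating and symmetric case, long root elements in the Lie-type case, central involutions in the sporadic case) are the natural candidates, and once one exhibits enough hubs lying in a single strongly connected subgraph, strong connectivity of $\Gamma_2(G)$ follows. The induction on the family would then be carried out separately for the alternating groups, the classical groups split by type and rank, the exceptional groups of Lie type, and the sporadic groups, reusing the structural information about maximal subgroups and centralizers that already underlies \cite{LS}. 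Outer automorphisms are handled by showing that passing from the socle $L$ to an almost simple overgroup $G$ only adds arcs between cosets of $L$, so strong connectivity of $\Gamma_2(L)$ or of a suitable induced subgraph of $\Gamma_2(G)$ propagates to $G$.

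The main obstacle is that the $2$-Engel condition $[x,y,y]=1$ is much more restrictive than the $n$-Engel condition for larger $n$: several of the local arc-constructions in \cite{LS} produce arcs only in $\Gamma_n$ for some $n\ge 3$ and must be replaced, at the cost of extra case analysis, by constructions that remain valid in $\Gamma_2$. This is exactly where the exceptions in Table~\ref{table1main} arise, and drawing the boundary sharply requires one to treat small-rank classical groups, a handful of small exceptional groups, the sporadics, and the alternating and symmetric groups of small degree individually. For these I would combine structural arguments based on the list of maximal subgroups and on the Sylow structure with direct computation in \textsf{GAP} or \textsf{Magma}, producing for each group either an explicit family of arcs that witnesses strong connectivity of $\Gamma_2(G)$, or a vertex whose $\Gamma_2$-in- or out-neighbourhood is empty forcing a strictly larger $n$, or an obstruction ruling out every $n$. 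Collecting the small groups falling into the last two cases gives Table~\ref{table1main}, completing the proof.
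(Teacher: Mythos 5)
Your proposal captures the correct overall architecture --- quote the reductions of \cite{LS} for most families, then do a case-by-case analysis of what remains and fall back on computation for small groups --- and your observation that $\Gamma_2(G)\subseteq\Gamma_3(G)\subseteq\cdots$ settles the minimality once strong connectivity of $\Gamma_2$ is established is exactly how the paper proceeds. However, there are two genuine gaps. First, the working criterion in the paper is not a search for ``hubs'' with large centralizers but the reduction coming from Williams' prime graph components together with Morgan--Parker (Theorem~\ref{theorem44}, Theorem~\ref{theorem111}, Corollary~\ref{cor}): for the groups at issue the commuting graph has a single component $\Omega$ containing all even-order elements, and the only obstruction to strong connectivity of $\Gamma_2(G)$ is the isolated abelian Hall $\psi$-subgroups attached to the odd components $\psi$ of the prime graph. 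The whole problem then collapses to exhibiting, for one element $g$ of such a Hall subgroup, vertices $x,y\in\Omega$ with $x\mapsto_2 g\mapsto_2 y$. This is what drives the explicit constructions for $\mathrm{PSL}_3(2^f)$ (an irreducible cubic $T^3+aT+1$, a companion-matrix element $g$ of order dividing $(q^2+q+1)/\gcd(3,q-1)$, and a diagonal $h$ with $[g,h]$ centralizing $h$), which are then pushed into $G_2(q)$ and $E_6(q)$ through subgroups isomorphic to $\mathrm{SL}_3(q).2$ and $\mathrm{PSL}_3(q^3)$. Your plan never identifies this reduction, and without it the case analysis you sketch has no concrete handle.

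Second, your fallback of ``direct computation in \textsf{GAP} or \textsf{Magma}'' fails for the Monster group, which the paper explicitly singles out as computationally inaccessible. There the remaining task is to show that an element $x$ of order $71$ has an out-neighbour in $\Omega$, and the paper does this with a purely character-theoretic argument: it takes $H=\mathbf{C}_G(\iota)\cong 2.B$, uses the decomposition of the permutation character $1_H^G$ and Norton's subdegrees to bound from below the number of $H$-cosets meeting the union $\mathcal{C}$ of classes of elements of order $71$ (via the auxiliary graph $\Delta(H,\mathcal{C})$ and inequality (8.4) of \cite{LS}), and then combines this with class-constant computations for $\overline{2A}\cdot\overline{2A}$ to force the existence of $g\in\mathcal{C}$ with $\iota\iota^g$ an involution, whence $g\mapsto_2\iota$. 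This step is not a refinement of a computation but a different kind of argument altogether, and your proposal as written has no route to it.
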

\begin{table}[!ht]
\begin{tabular}{c|c|c}\hline 
\toprule[1.5pt]
Group&$n$& Comments\\
\midrule[1.5pt]
$\mathrm{PSL}_2(9)\cong \mathrm{Alt}(6)$&$3$&\\
$M_{10}\cong\mathrm{PSL}_2(9).2$&3&\\
$\mathrm{PSL}_2(q)$&does not exist&$q=2^f$, $f\ge 2$, or $q\equiv 5\pmod 8$\\
$\mathrm{PSL}_2(q)$&$n$&$q\equiv 3\pmod 4$, where $2^{n-1}$\\
&& is the largest power of $2$ dividing $(q+1)/2$\\
${}^2B_2(q)$&does not exist&$q=2^{2f+1}$, $f>0$\\
$\mathrm{Aut}({}^2B_2(q))$&does not exist&$q=2^{2f+1}$, $f>0$, $2f+1$ prime\\
$\mathrm{PSL}_3(4)$&3&\\
\bottomrule[1.5pt]
\end{tabular}
\caption{Auxiliary table for Theorem~$\ref{main}$}\label{table1main}
\end{table}

To prove Theorem~\ref{main}, we use the results in~\cite{LS}. Indeed, except for a few families of almost simple groups, Theorem~\ref{main} follows  from the work in~\cite{LS}. In this section, we discuss the cases that have been proven in~\cite{LS} (and in~\cite{DLN} for alternating groups) and we list the remaining cases.
\begin{itemize}
\item[$\bm{\mathrm{Alt}(n)}$] Using~\cite{DLN}, Theorem~\ref{main} follows when $L\cong\mathrm{Alt}(n)$. Strictly speaking, the results in~\cite{DLN} are not phrased to obtain directly Theorem~\ref{main} for alternating groups, however, the proofs in~\cite{DLN} show that, except when $n\in \{5,6\}$, $\Gamma_2(\mathrm{Alt}(n))$ and $\Gamma_2(\mathrm{Sym}(n))$ are strongly connected. The veracity of Theorem~\ref{main} for almost simple groups having socle $\mathrm{Alt}(5)$ and $\mathrm{Alt}(6)$ can be checked with a computation with the aid of a computer.
\item[$\bm{\mathrm{PSL}_m(q)}$] Using Theorems~3.8 and~3.11 in~\cite{LS} and Theorem~5.9 in~\cite{DLN}, Theorem~\ref{main} follows when $L\cong\mathrm{PSL}_2(q)$. From Proposition~4.2 and the proof of Corollary~1.4 in~\cite{LS}, Theorem~\ref{main} follows when $L\cong\mathrm{PSL}_m(q)$, where either $m\ge 4$, or $m=3$ and $q$ is odd. 
\item[$\bm{\mathrm{PSU}_m(q)}$] From Proposition~5.5 and the proof of Corollary~1.4 in~\cite{LS}, Theorem~\ref{main} follows when $L\cong\mathrm{PSU}_m(q)$. 
\item[$\bm{\mathrm{PSp}_m(q)}$]Theorem~\ref{main} follows from~\cite[Proposition~6.1]{LS} when $L\cong\mathrm{PSp}_m(q)$. 
\item[\bf{Exc. Grp.}]When $L$ is an exceptional group of Lie type, Theorem~\ref{main} follows from the propositions in~\cite[Section~8]{LS}, except when $L\cong G_2(q)$ with $q=2^f$ and $f$ odd, or $L\cong E_6(q)$ with $q=2^f$.
\item[\bf{Sporadic}]We deal with all sporadic simple groups.
\end{itemize}

Summing up, the remaining cases are almost simple groups having socle one of the following groups
$$\mathrm{PSL}_3(2^f) \,\,f\ge 3,\,\,\, G_2(2^f)\,\,f\ge3 \textrm{ odd},\,\,\, E_6(2^f),\,\,\, \textrm{sporadic}.$$

The proof of Corollary~1.4 in~\cite{LS} shows that, if $G$ is an almost simple group with socle $L$ and if $\Gamma_2(L)$ is strongly connected, then so is $\Gamma_2(G)$. In particular, in our arguments we may often assume $G=L$.

Most sporadic simple groups are dealt with in~\cite{DLN}. However, rather than pointing out the cases that need some extra consideration, we simply deal with the whole family. This will also help to clarify our strategy.

\section{Notation and preliminaries}\label{not&prel}
In what follows,  we write $x\mapsto_n y$ to denote the (directed) arc $(x,y)$ of $\Gamma_n(G)$.

To prove Theorem~\ref{main} we use various ingredients. The first is a relation among the Engel graph, the \textit{\textbf{prime graph}} and the \textit{\textbf{commuting graph}}. Given a finite group $G$, we denote by $\pi(G)$ the set of prime divisors of the order of $G$. More generally, given a positive integer $n$, we denote by $\pi(n)$ the set of prime divisors of $n$. Now, the prime graph $\Pi(G)$ of $G$ is the graph having vertex set $\pi(G)$ and where two distinct primes $r$ and $s$ are declared to be adjacent if and only if $G$ contains an element having order divisible by $rs$. 

Suppose now that ${\bf Z}(G)=1$. Observe that $\Gamma_1(G)$ is a subgraph of the Engel graph $\Gamma_n(G)$. In particular, the connected components of  $\Gamma_1(G)$ give useful information on the strongly connected components of $\Gamma_n(G)$. Now, a result of Williams~\cite{Williams} (see also~\cite[Theorem~4.4]{mp}) gives a method to control the connected components of $\Gamma_1(G)$ using the connected components of $\Pi(G)$.
\begin{theorem}[Theorem~4.4,~\cite{mp}]\label{theorem44}
Let $G$ be a  non-solvable group with ${\bf Z} (G ) = 1$, let $\Psi$ be a connected component of the commuting graph of $G$ and let $\psi:=\pi(\Psi)$ be the set of prime numbers dividing the order of some element of $\Psi$. Suppose $2\notin \psi$. Then $G$ has an abelian Hall $\psi$-subgroup $H$ which is isolated in the commuting graph of $G$ and $\Psi = H\setminus\{1\}$. In particular, $\Psi$ has diameter $1$ in the commuting graph and  $\psi$ is a connected component of the prime graph of $G$.
 \end{theorem}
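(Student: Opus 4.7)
The strategy is to show that $H := \Psi \cup \{1\}$ is an abelian Hall $\psi$-subgroup of $G$, and then derive the remaining statements from this. The starting observation is elementary: for any $x \in \Psi$, every non-identity element $y$ of $C_G(x)$ is joined to $x$ by an edge of the commuting graph and hence lies in $\Psi$. Therefore every prime divisor of $|y|$ lies in $\psi$, so $C_G(x)$ is a $\psi$-group. By the assumption $2 \notin \psi$ it has odd order, and by the Feit--Thompson theorem it is solvable.

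Next, I would fix a prime $p \in \psi$, choose $x \in \Psi$ of order $p$, let $P$ be a Sylow $p$-subgroup of $G$ containing $x$, and pick $z \in Z(P) \setminus \{1\}$. Then $z \in C_G(x) \setminus \{1\} \subseteq \Psi$, and $H_0 := C_G(z)$ is a solvable $\psi$-group containing $P$; in particular $P$ is a Sylow $p$-subgroup of $H_0$. The key step is to verify that $H_0$ is abelian and that every element of $\Psi$ already lies in $H_0$. For the first claim, note that for every $h \in H_0 \setminus \{1\}$ one has $h \in \Psi$, so $C_G(h)$ is also a solvable $\psi$-group; coupling this with the fact that $z$ and $h$ belong to each other's centralizers allows one to conclude, via a Bender-style analysis of overlapping centralizers in a group of odd order, that $C_G(h) = C_G(z)$, which then forces $H_0$ to be abelian. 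For the second claim, one walks along a path in $\Psi$ from any $y \in \Psi$ back to $z$; at each step the centralizer equalities just established allow the next vertex to be placed inside $H_0$, so $y \in H_0$.

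Once $H := H_0$ is pinned down as an abelian subgroup with $H \setminus \{1\} = \Psi$, showing that $H$ is a Hall $\psi$-subgroup is straightforward: for every $q \in \psi$, an element of $\Psi$ of order $q$ lies in $H$, and a Sylow-type argument then gives $|H|_q = |G|_q$. Isolation of $H$ in the commuting graph is immediate: any $g \notin H$ commuting with some $h \in H \setminus \{1\}$ would lie in $C_G(h) \setminus \{1\} = \Psi \subseteq H$, a contradiction. For the prime graph statement, the abelianness of $H$ yields an element of order $pq$ for every $p,q \in \psi$, so $\psi$ spans a clique in $\Pi(G)$; conversely, any element of $G$ of order $pr$ with $p \in \psi$ and $r \notin \psi$ would produce commuting $p$- and $r$-elements, contradicting the isolation of (a $G$-conjugate of) $H$.

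The main obstacle is the centralizer-equality step in the middle paragraph: this is where the hypothesis $2 \notin \psi$ does the real work, since Feit--Thompson solvability of the relevant centralizers is what lets one control how two centralizers of $\psi$-elements can overlap and force them to coincide. Without odd order, the commuting component $\Psi$ need not close up into a subgroup at all, and indeed the conclusion of the theorem can fail for the component containing $2$.
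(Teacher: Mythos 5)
The paper does not actually prove this statement: it is quoted verbatim as Theorem~4.4 of Morgan--Parker \cite{mp}, whose proof in turn rests on Williams' analysis of prime graph components \cite{Williams} (the Gruenberg--Kegel structure theorem). So there is no internal proof to compare against, and your argument has to stand on its own.

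Your opening reductions are correct and are indeed how every treatment begins: for $x\in\Psi$ one has ${\bf C}_G(x)\setminus\{1\}\subseteq\Psi$, so ${\bf C}_G(x)$ is a $\psi$-group of odd order, hence solvable by Feit--Thompson; and once one knows that $\Psi=H\setminus\{1\}$ for some subgroup $H$, the isolation, the Hall property, and the prime-graph statement all follow by essentially the formal arguments you give. The gap is the middle step. The assertion that ${\bf C}_G(h)={\bf C}_G(z)$ for every $h\in {\bf C}_G(z)\setminus\{1\}$ cannot be dispatched by an unspecified ``Bender-style analysis of overlapping centralizers in a group of odd order'': two commuting elements of a solvable group of odd order need not have equal centralizers, and nothing in your local setup (two solvable $\psi$-groups ${\bf C}_G(z)$ and ${\bf C}_G(h)$, each containing $\langle z,h\rangle$) forces them to coincide. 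Worse, this equality is \emph{equivalent} to the conclusion of the theorem: an isolated abelian subgroup $H$ satisfies ${\bf C}_G(h)=H$ for all $h\in H\setminus\{1\}$, and conversely. So the proposal is circular exactly where all of the content sits, and the subsequent steps (abelianness of $H_0$, the path-walking argument placing all of $\Psi$ inside $H_0$, and the Sylow argument for the Hall property) all consume this unproved equality. The actual proofs do not proceed by a local centralizer comparison; they invoke the global structure theorem for groups with disconnected prime graph --- $G$ is Frobenius, $2$-Frobenius, or has a normal series $1\trianglelefteq N\trianglelefteq M\trianglelefteq G$ with $M/N$ simple and $N$, $G/M$ $\pi_1$-groups --- and then identify the component $\psi$ with an isolated nilpotent (here abelian) Hall subgroup arising from that structure. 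A self-contained proof would have to reproduce that machinery; as written, your sketch does not close.
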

 Theorem~\ref{theorem44} describes the connected components of the commuting graphs of non-solvable groups with trivial center.
They consist of perhaps more than one connected component containing involutions and  the
remaining connected components are complete graphs. Moreover, these remaining connected components determine (and are determined) by the connected components of the prime graph consisting of odd primes. In particular, the classification of Williams~\cite{Williams} of the connected components of the prime graph of simple groups is important in our work. We also use the work of Kondrat'ev and Mazurov~\cite{KoMa} on the prime graph of non-abelian simple groups, because it corrects some inaccuracy in~\cite{Williams}.

For simple groups of Lie type the work of Morgan and Parker~\cite{mp} simplifies further our analysis and allow us to deduce important informations on the connected components of $\Gamma_1(G)$ even when the prime graph $\Pi(G)$ is disconnected.
\begin{theorem}[Proposition~8.10,~\cite{mp}]\label{theorem111}
	Let $G$ be a simple group of Lie type and assume G is not isomorphic to one of
	the following:
 $$\mathrm{PSL}_3(4), \mathrm{PSL}_2(q), {}^2B_2(q), {}^2G_2(q), {}^2F_4(q) \hbox{ with }q\ge 8.$$ Then the commuting graph of $G$ has a connected component containing all elements of even order of $G$.
\end{theorem}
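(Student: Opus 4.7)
The plan is to reduce the statement to a question about involutions, establish connectivity inside each Sylow $2$-subgroup, and then glue the Sylow $2$-subgroups using the Lie-type structure of $G$. The reduction is immediate: if $x\in G$ has even order $2m$, then $x^m$ is a non-identity involution in $\langle x\rangle$ that commutes with $x$, so $x$ is adjacent in $\Gamma_1(G)$ to an involution. It therefore suffices to prove that all involutions of $G$ lie in one connected component of $\Gamma_1(G)$.

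For the local step, fix a Sylow $2$-subgroup $P$ of $G$. Since $P$ is a non-trivial $2$-group, its centre $Z(P)$ contains an involution $z$, and $z$ commutes with every element of $P$. Hence the involutions of $P$ are pairwise at commuting-graph distance at most $2$, and in particular lie in a common connected component of $\Gamma_1(G)$. It remains to show that involutions in distinct Sylow $2$-subgroups can also be connected, i.e.\ that the auxiliary graph whose vertices are the Sylow $2$-subgroups of $G$ and whose edges join pairs $(P_1,P_2)$ with $C_G(t_1)\cap C_G(t_2)\neq 1$ for some involutions $t_i\in P_i$ is connected.

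For groups $G$ outside the excluded list, this gluing is carried out case by case through the classification of finite simple groups. For classical groups of rank at least $2$ in odd characteristic, involutions are semisimple and their centralizers are essentially products of classical groups of smaller rank modulo scalars; such centralizers contain a wealth of further involutions, yielding many commuting pairs between distinct Sylow $2$-subgroups. For Lie-type groups in characteristic $2$ of rank at least $2$, involutions are unipotent and lie in long-root subgroups; any two long-root subgroups associated with orthogonal roots commute elementwise, and the orthogonality structure of the root system produces the needed gluing. For exceptional groups other than ${}^2G_2(q)$ and ${}^2F_4(q)$, one typically exhibits an involution whose centralizer has the form $A_1(q)\times L$ for a Lie-type factor $L$ of smaller rank, and concludes by induction on the rank.

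The main obstacle is that the excluded families genuinely fail the gluing step, so the argument must be arranged so that they drop out naturally without any of them slipping through the generic case. In ${}^2B_2(q)$ the Sylow $2$-subgroup is a TI-subgroup, so $C_G(t_1)\cap C_G(t_2)=1$ for involutions in distinct Sylow $2$-subgroups; an analogous rigidity occurs for ${}^2G_2(q)$ and ${}^2F_4(q)$. In $\mathrm{PSL}_2(q)$ the Sylow $2$-subgroups are very small (elementary abelian of order $q$ for $q$ even, dihedral for $q$ odd), so there is insufficient $2$-local structure to glue all involutions together. Finally, $\mathrm{PSL}_3(4)$ is a well-known small exception verified by direct computation. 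The delicate part is therefore ensuring that the generic argument survives every other family while these exceptional behaviours really do arise.
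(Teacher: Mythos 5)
This statement is imported by the paper from Morgan and Parker (\cite{mp}, Proposition~8.10) and is used as a black box; the paper contains no proof of it, so there is nothing internal to compare your attempt against. Judged on its own merits, your outline has a sound skeleton: the reduction $x\mapsto x^m$ to involutions is correct (and legitimate since $G$ is simple, hence centreless, so all non-identity elements are vertices), the local step via an involution in ${\bf Z}(P)$ is correct, and connectivity of your auxiliary graph on Sylow $2$-subgroups would indeed finish the argument. But the decisive third step is asserted rather than proved. Phrases such as ``contain a wealth of further involutions,'' ``the orthogonality structure of the root system produces the needed gluing,'' and ``one typically exhibits'' carry the entire weight of the theorem, and your closing sentence concedes that the actual verification --- that the generic mechanism survives every non-excluded family --- has not been carried out. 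As it stands this is a plan for a proof, not a proof.

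There is also a concrete hole in the case division. Your classical and characteristic-$2$ cases both assume rank at least $2$, and your exceptional case handles the exceptional types; this leaves out $\mathrm{PSU}_3(q)$, the one family of twisted rank $1$ that is \emph{not} among the excluded groups. For $q=2^n$ this omission is fatal to the mechanisms you propose: $\mathrm{PSU}_3(2^n)$ has a strongly embedded subgroup (Bender), which is equivalent to the graph on involutions with commuting-involution edges being disconnected. Hence no gluing based on commuting $2$-elements, long-root subgroups, or orthogonal roots can connect its Sylow $2$-subgroups; the paths forced by the theorem must pass through elements of \emph{odd} order. Your auxiliary-graph condition ${\bf C}_G(t_1)\cap{\bf C}_G(t_2)\neq 1$ does in principle permit odd-order connectors, but none of the three mechanisms you describe produces one. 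This is precisely the kind of case that makes the Morgan--Parker result a genuine case-by-case analysis through the classification rather than a uniform root-system argument, and it is why the excluded list consists of $\mathrm{PSL}_2(q)$, ${}^2B_2(q)$, ${}^2G_2(q)$, ${}^2F_4(q)$ and $\mathrm{PSL}_3(4)$ but not $\mathrm{PSU}_3(q)$: the latter is saved only by odd-order intermediaries that your sketch never supplies.
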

Combining Theorems~\ref{theorem44} and~\ref{theorem111}, we obtain this useful reduction.
\begin{corollary}[Corollary~2.4,~\cite{LS}]\label{cor}
Let $n$ be a positive integer, let $G$ be a simple group  with the property that the communting graph of $G$ has a unique connected component, $\Omega$ say, containing all elements having even order.  Then $\Gamma_n(G)$ is strongly connected if and only if, for every connected component $\psi$ of the prime graph of $G$ with $2\notin\psi$ and for every Hall $\psi$-subgroup $H$ of $G$ (whose existence is guaranteed by Theorem~$\ref{theorem44}$), there exists $h\in H\setminus \{1\}$ and $x,y\in\Omega$ with $x\mapsto_n h$ and $h\mapsto_n y$.
\end{corollary}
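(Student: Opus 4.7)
The plan is to combine the inclusion $\Gamma_1(G)\subseteq\Gamma_n(G)$ with the description of the components of $\Gamma_1(G)$ coming from Theorem~\ref{theorem44} and the hypothesis on $\Omega$. Applying Theorem~\ref{theorem44} to each connected component $\psi$ of $\Pi(G)$ with $2\notin\psi$, every Hall $\psi$-subgroup $H\le G$ is abelian and $H\setminus\{1\}$ is a full connected component of the commuting graph; hence the components of the commuting graph are $\Omega$ together with the cliques $H\setminus\{1\}$, where $H$ ranges over conjugates of Hall $\psi$-subgroups for the relevant $\psi$. Since $[x,y]=1$ forces $[x,_ny]=1=[y,_nx]$, every edge of $\Gamma_1(G)$ contributes an opposite pair of arcs in $\Gamma_n(G)$: the induced subdigraph on $\Omega$ is strongly connected and each $H\setminus\{1\}$ is a bidirected clique inside $\Gamma_n(G)$. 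In addition, the isolation of $H\setminus\{1\}$ in the commuting graph forces $C_G(h)=H$ for every $1\ne h\in H$, so that $H$ is a self-centralizing TI abelian subgroup of $G$.

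For the sufficient direction, given arbitrary vertices $u,v$ of $\Gamma_n(G)$ I would construct a directed $u\to v$ walk by cases on the commuting-graph components containing $u$ and $v$. When $u,v\in\Omega$, the strongly connected $\Omega$-subdigraph suffices. When $u\in H\setminus\{1\}$ for a small component $H$, the hypothesis supplies $h\in H\setminus\{1\}$ and $y\in\Omega$ with $h\mapsto_n y$, so the clique arc $u\mapsto_n h$ composed with $h\mapsto_n y$ carries $u$ into $\Omega$; a dual use of the bridging arc $x\mapsto_n h'$ enters any $v\in H'\setminus\{1\}$ from $\Omega$. The remaining configurations of $(u,v)$ are handled by concatenation.

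For the necessary direction, assume $\Gamma_n(G)$ is strongly connected and fix $\psi$ together with a Hall $\psi$-subgroup $H$. Picking $h_0\in H\setminus\{1\}$, strong connectivity yields shortest directed paths $\Omega\to h_0$ and $h_0\to\Omega$. Minimality, combined with the strongly connected $\Omega$-subdigraph and the bidirected clique on $H\setminus\{1\}$, forbids interior vertices in $\Omega$ or in $H\setminus\{1\}$; iterating the same shortest-path analysis on small cliques for $\psi'\ne\psi$ encountered as detours, and transporting bridging arcs via conjugacy of Hall $\psi'$-subgroups (an action of $G$ as a digraph automorphism of $\Gamma_n(G)$ that preserves $\Omega$ setwise), one collapses the paths to direct bridges $x\mapsto_n h_1$ and $h_2\mapsto_n y$ with $h_1,h_2\in H\setminus\{1\}$. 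Conjugating by an element of $N_G(H)$ that sends $h_1$ to $h_2$ (available because the TI condition forces every $G$-conjugacy relating two elements of $H$ to lie inside $N_G(H)$) finally merges the two witnesses into a single $h\in H\setminus\{1\}$.

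The main obstacle is the necessary direction: one has to rule out interior vertices of the shortest paths that hop through small cliques attached to $\psi'\ne\psi$, and then to merge the ``incoming'' and ``outgoing'' witnesses at a common $h\in H$. The self-centralizing TI property $C_G(h)=H$ for $1\ne h\in H$ (an immediate consequence of the isolation of $H\setminus\{1\}$ in the commuting graph) is the technical rigidity that powers both steps, by forcing detours to respect the partition of small components and by transferring $G$-conjugacy between elements of $H$ into $N_G(H)$-conjugacy.
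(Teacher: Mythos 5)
The paper itself does not prove this statement --- it is imported verbatim from \cite{LS} (Corollary~2.4 there) --- so your attempt can only be measured against the standard argument. Your sufficiency direction is correct and is the expected one: by Theorem~\ref{theorem44} every vertex lies in $\Omega$ or in one of the isolated cliques $H\setminus\{1\}$, every commuting pair gives a pair of opposite arcs of $\Gamma_n(G)$, and the hypothesised bridges $x\mapsto_n h\mapsto_n y$ let you concatenate walks between any two vertices.

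The necessity direction, however, has two genuine gaps. First, minimality of a directed path from $\Omega$ to $H\setminus\{1\}$ only excludes interior vertices lying in $\Omega$ or in $H\setminus\{1\}$; it does not exclude interior vertices lying in isolated cliques $H'\setminus\{1\}$ attached to other components $\psi'\ne\psi$. To collapse such a detour you need information about Engel arcs whose tail lies in one isolated component and whose head lies in another, and neither ``iterating the shortest-path analysis'' nor conjugating arcs by elements of $G$ supplies it: conjugation carries the arc to conjugate components, it does not replace the intermediate vertex $h'\in H'\setminus\{1\}$ by an element of $\Omega$. This is exactly the content of the auxiliary lemmas in Section~2 of \cite{LS} that your argument silently assumes. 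Second, the merging step fails as written: you conjugate by an element of ${\bf N}_G(H)$ ``that sends $h_1$ to $h_2$'', but two nontrivial elements of $H$ need not be conjugate in $G$ at all (they may have different orders, since $H$ is only assumed abelian). The self-centralizing/TI property converts a $G$-conjugacy $h_1^g=h_2$, \emph{when one exists}, into an ${\bf N}_G(H)$-conjugacy; it does not produce one. Without this step you only obtain $x\mapsto_n h_1$ and $h_2\mapsto_n y$ with possibly $h_1\ne h_2$, which does imply strong connectivity via the clique arc $h_1\mapsto_n h_2$ but is strictly weaker than the single-witness condition the corollary asserts, so the ``only if'' half of the equivalence is not established.
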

Observe that when $G$ is isomorphic to $\mathrm{PSL}_3(2^f)$ with $2^f\ge 8$, or $G_2(q)$, or $E_6(q)$, Theorem~$\ref{theorem111}$ guarantees the existence of $\Omega$ in Corollary~\ref{cor}. 

In this paper, we denote by ${\bf o}(g)$ the order of the group element $g\in G$ and by ${\bf N}_G(H)$ the normalizer of the subgroup $H\le G$. 

We conclude with a   basic observation.

\begin{lemma}\label{lemma:basic2}
Let $n$ be a positive integer, let $G$ be a group and let $\Omega$ be a strongly connected component of $\Gamma_n(G)$. Suppose that, for each vertex $g\notin \Omega$, there exists a subgroup $H$ of $G$ and $h_1,h_2\in H\cap \Omega$ with $g\in H$ and $h_1\mapsto_n g\mapsto_n h_2$. Then $\Gamma_n(G)$ is strongly connected.
\end{lemma}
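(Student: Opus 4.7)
The plan is to verify directly that every pair of vertices of $\Gamma_n(G)$ is joined by a directed path, using the strong connectivity of $\Omega$ as a ``highway.'' The subgroup $H$ appearing in the hypothesis plays no role in the proof of the lemma itself: it is only a convenient wrapper for the application, and all we truly use is the existence of the two arcs $h_1\mapsto_n g$ and $g\mapsto_n h_2$ with $h_1,h_2\in\Omega$.

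Concretely, I would pick arbitrary vertices $x,y$ of $\Gamma_n(G)$ and exhibit a directed $x$--$y$ path by splitting into four cases according to whether $x,y$ lie in $\Omega$ or not. In each case the construction is:
\begin{itemize}
\item if $x\notin \Omega$, apply the hypothesis to obtain $h_2^{(x)}\in\Omega$ with $x\mapsto_n h_2^{(x)}$;
\item if $y\notin\Omega$, apply the hypothesis to obtain $h_1^{(y)}\in\Omega$ with $h_1^{(y)}\mapsto_n y$;
\item choose the ``entry'' vertex into $\Omega$ (either $x$ itself if $x\in\Omega$, or $h_2^{(x)}$) and the ``exit'' vertex out of $\Omega$ (either $y$ itself if $y\in\Omega$, or $h_1^{(y)}$), and connect them by a directed path inside $\Omega$ using strong connectivity of $\Omega$;
\item concatenate.
\end{itemize}

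Alternatively, and perhaps more elegantly, one may interpret the statement as saying that $\Omega$ is a \emph{maximal} strongly connected subgraph. In that case it suffices to show that no vertex lies outside $\Omega$. Indeed, if $g\notin\Omega$ then the hypothesis yields $h_1,h_2\in\Omega$ with $h_1\mapsto_n g\mapsto_n h_2$, and by strong connectivity of $\Omega$ there is a directed path from $h_2$ back to $h_1$ inside $\Omega$. Adjoining the arcs $h_1\to g$ and $g\to h_2$ produces a directed cycle through $g$ and the vertices of $\Omega$, showing that $\Omega\cup\{g\}$ is contained in a single strongly connected subgraph; maximality of $\Omega$ forces $g\in\Omega$, a contradiction. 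Hence $\Omega$ is the entire vertex set of $\Gamma_n(G)$, and $\Gamma_n(G)$ is strongly connected.

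There is no real obstacle here; the content is just bookkeeping. The only thing to be careful about is the convention regarding the meaning of ``strongly connected component'' (maximal vs.\ merely strongly connected subgraph), but both readings lead to the same one-line argument, so I would phrase the proof in the first, more explicit form to avoid any ambiguity.
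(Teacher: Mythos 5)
Your proof is correct. The paper states this lemma as a ``basic observation'' and omits the proof entirely, so there is nothing to compare against; both of your arguments (the explicit four-case concatenation and the maximality argument) are valid, and the first one is exactly the routine verification the authors evidently had in mind. You are also right that the subgroup $H$ is irrelevant to the proof and serves only to package the hypothesis for the recursive applications later in the paper.
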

Lemma~\ref{lemma:basic2} allows us to work recursively on the subgroups of $G$.

\section{The groups $\mathrm{PSL}_3(q)$, $G_2(q)$ and $E_6(q)$}\label{sec:linear}
Given a prime power $q$, we denote by $\mathbb{F}_q$ a finite field having cardinality $q$. We start with a preliminary observation.
\begin{lemma}\label{l:1}Let $q$ be an even prime power. Then there exists $a\in\mathbb{F}_q$ such that $T^3+aT+1$ is an irreducible polynomial over $\mathbb{F}_q$.
\end{lemma}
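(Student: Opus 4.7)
The plan is to reduce irreducibility of $T^3+aT+1$ over $\mathbb{F}_q$ to a simple counting argument on a map $\mathbb{F}_q^{\ast}\to \mathbb{F}_q$.

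First I would recall that a cubic polynomial over a field is reducible if and only if it has a root in the field (the non-trivial factorisations of a degree-$3$ polynomial necessarily involve a linear factor). So it suffices to exhibit $a\in\mathbb{F}_q$ for which $T^3+aT+1$ has no root in $\mathbb{F}_q$.

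Next, I would analyse when a given $t\in\mathbb{F}_q$ is a root. Since the constant term is $1$, clearly $t\neq 0$. Rearranging $t^3+at+1=0$ and using that $\mathrm{char}\,\mathbb{F}_q=2$ gives
\[
a \;=\; t^{2}+t^{-1}.
\]
Conversely, any $a$ of this form admits $t$ as a root. Therefore, the set of $a\in\mathbb{F}_q$ for which $T^3+aT+1$ has a root in $\mathbb{F}_q$ is precisely the image of the map
\[
\phi\colon \mathbb{F}_q^{\ast}\longrightarrow \mathbb{F}_q,\qquad \phi(t):=t^{2}+t^{-1}.
\]

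The final step is a cardinality comparison. The domain $\mathbb{F}_q^{\ast}$ has size $q-1$, so $|\mathrm{Im}(\phi)|\le q-1<q=|\mathbb{F}_q|$. Hence there exists $a\in\mathbb{F}_q\setminus\mathrm{Im}(\phi)$, and for any such $a$ the polynomial $T^3+aT+1$ has no root in $\mathbb{F}_q$, so it is irreducible.

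There is essentially no obstacle here: the argument works for every even $q$ (including $q=2$, where $\mathbb{F}_2^{\ast}$ has only one element and $T^3+T+1$ is indeed irreducible). The only point to keep alert about is that in characteristic $2$ we have $-1=1$, which is what allows the clean rearrangement $a=t^2+t^{-1}$; if one wanted an analogous statement in odd characteristic, the same trick would need a separate analysis.
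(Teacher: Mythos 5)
Your proof is correct, and it takes a genuinely more direct route than the paper. You solve $t^3+at+1=0$ for $a$, obtaining (in characteristic $2$) the parametrization $a=t^2+t^{-1}$ of the ``bad'' values of $a$ by the nonzero roots $t$, and then win by cardinality: the image of $\phi\colon\mathbb{F}_q^{\ast}\to\mathbb{F}_q$ has at most $q-1$ elements, so some $a$ is omitted. The paper argues by contradiction instead: assuming every $f_a=T^3+aT+1$ is reducible (hence has a root), it shows distinct $f_a$'s have disjoint root sets --- which is the same fact as the well-definedness of your $\phi$, read in the other direction --- deduces that each $f_a$ has a \emph{unique} root in $\mathbb{F}_q$, and then derives the contradiction from the specific polynomial $f_1=T^3+T+1$: its root would force $\mathbb{F}_8\le\mathbb{F}_q$, over which $f_1$ splits completely. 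Your argument makes that last step unnecessary, essentially because you exploit that $0$ is never a root, so the $q$ bad values of $a$ would have to be covered by only $q-1$ candidate roots; this also cleanly handles $q=2$ and would let you count the number of good $a$ by analysing the fibres of $\phi$ if you ever needed that. Both proofs rest on the same two elementary facts (a cubic over a field is reducible iff it has a root, and each nonzero $t$ determines a unique $a$), so the difference is one of economy rather than substance.
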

\begin{proof}
We argue by contradiction and we suppose that, for each $a\in\mathbb{F}_q$, the polynomial $f_a(T)=T^3+aT+1$ is reducible over $\mathbb{F}_q$. Since $f_a(T)$ is reducible, it has a root in $\mathbb{F}_q$. Suppose that $f_a(T)$ and $f_b(T)$ have a common root $r$. Then
$(f_a-f_b)(r)=0.$ As $f_a-f_b=(a-b)T$, we deduce that either $r=0$ or $a=b$. Since $f_a(0)=1$, we get $a=b$. This shows that the polynomials in $\{f_a\}_{a\in\mathbb{F}_q}$ have no common root.
Since the family $\{f_a\}_{a\in\mathbb{F}_q}$ consists of $q$ polynomials and since each polynomial in this family has a root in $\mathbb{F}_q$, we deduce that $f_a$ has a unique root $r_a\in\mathbb{F}_q$.

Let $q=2^f$. Observe that $f_1(T)=T^3+T+1$ is irreducible over $\mathbb{F}_2$. As $r_1\in \mathbb{F}_q$ and as $r_1$ is a root of $f_1$, we deduce  $\mathbb{F}_2[r_1]=\mathbb{F}_8\le\mathbb{F}_q$. Since $\mathbb{F}_8\le\mathbb{F}_q$, $f_1(T)$ has three roots in $\mathbb{F}_q$, contradicting the fact that $r_1$ is the only root of $f_1$ in $\mathbb{F}_q$.
\end{proof}
\begin{proposition}\label{propo:SL}Let $q\ge 4$ be an even prime power and let $G$ be an almost simple group having socle $L=\mathrm{PSL}_3(q)$. Then $\Gamma_2(G)$ is strongly connected, except when $G=\mathrm{PSL}_3(4)$. Moreover, $\Gamma_3(\mathrm{PSL}_3(4))$ is strongly connected. In particular, Theorem~$\ref{main}$ holds true for almost simple groups having socle $L$.
\end{proposition}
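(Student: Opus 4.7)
The proof naturally splits into two regimes: the exceptional case $q=4$, and the generic case $q=2^f$ with $f\ge 3$. The case $q=4$ must be handled separately because $\mathrm{PSL}_3(4)$ is listed as an exception in Theorem~\ref{theorem111}, so the single-even-component hypothesis of Corollary~\ref{cor} is not automatically available. The plan here is a direct machine computation (for instance in \texttt{GAP}): enumerate the arcs of $\Gamma_2(\mathrm{PSL}_3(4))$ and exhibit more than one strongly connected component, thereby showing that $n=2$ fails; then verify that $\Gamma_3(\mathrm{PSL}_3(4))$ is strongly connected. The almost simple overgroups of $\mathrm{PSL}_3(4)$ are likewise handled by a finite computer check, since there are only finitely many, one for each subgroup of $\mathrm{Out}(\mathrm{PSL}_3(4))$.

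For $L=\mathrm{PSL}_3(q)$ with $q=2^f$ and $f\ge 3$, Theorem~\ref{theorem111} supplies a unique connected component $\Omega$ of the commuting graph of $L$ containing every element of even order, so Corollary~\ref{cor} applies. The Williams and Kondrat'ev--Mazurov classification of prime-graph components tells us that, writing $d=\gcd(3,q-1)$, the unique connected component $\psi$ of $\Pi(L)$ with $2\notin\psi$ is $\psi=\pi((q^2+q+1)/d)$, and the corresponding Hall $\psi$-subgroup is a Singer cycle $C$ of order $(q^2+q+1)/d$. The task therefore reduces to exhibiting some $h\in C\setminus\{1\}$ together with $x,y\in\Omega$ such that $x\mapsto_2 h$ and $h\mapsto_2 y$; once this is done, the inheritance principle stated after Corollary~\ref{cor} propagates strong connectivity from $L$ to every almost simple group with socle $L$.

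To produce $h$, $x$ and $y$, I would exploit Lemma~\ref{l:1}: choose $a\in\mathbb{F}_q$ so that $T^3+aT+1$ is irreducible, and let $h$ be the image in $\mathrm{PSL}_3(q)$ of the companion matrix of this polynomial. The constant term $1$ guarantees that this companion matrix lies in $\mathrm{SL}_3(q)$, while the vanishing coefficient of $T^2$, i.e.\ $\mathrm{tr}(h)=0$, is precisely what keeps the subsequent double-commutator computations manageable. Irreducibility of the polynomial forces $\langle h\rangle$ to sit inside a Singer cycle, hence (up to conjugacy) inside $C$. One then writes down explicit even-order matrices $x,y\in\mathrm{SL}_3(q)$—for instance transvections of the form $I+E_{ij}$, or short products of such—and verifies by direct matrix calculation in the companion basis of $h$ that $[[x,h],h]=1$ and $[[h,y],y]=1$. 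The main obstacle is exactly this explicit verification: one cannot shortcut it by taking $x,y\in\mathbf{N}_L(C)$, because $\mathbf{N}_L(C)/C$ is cyclic of order $3$, so $\mathbf{N}_L(C)$ has odd order and contains no element of even order. The candidates $x$ and $y$ must therefore be chosen outside the Singer normalizer, and the collapse of the double commutators will rely critically on the trace-zero, constant-term-one normal form of $h$ provided by Lemma~\ref{l:1}, which is what allows the argument to be performed uniformly across all $f\ge 3$.
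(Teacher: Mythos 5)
Your skeleton matches the paper's: $q=4$ and its overgroups by machine computation, the passage from $L$ to $G>L$ via the proof of Corollary~1.4 of \cite{LS}, and for $q\ge 8$ the reduction through Theorem~\ref{theorem111} and Corollary~\ref{cor} to producing two arcs at a Singer element $h$, taken to be the image of the companion matrix of the irreducible polynomial $T^3+aT+1$ from Lemma~\ref{l:1}. However, the heart of the argument --- actually exhibiting $x,y\in\Omega$ with $x\mapsto_2 h\mapsto_2 y$ --- is precisely the step you defer to an unexecuted ``direct matrix calculation,'' and the route you prescribe for it rests on a false premise. You insist that $x$ and $y$ have even order; but Corollary~\ref{cor} only requires them to lie in $\Omega$, and by Theorem~\ref{theorem44} the component $\Omega$ contains \emph{every} element whose order is divisible by a prime of $\pi_1=\pi(q(q-1)(q^2-1))$, in particular many elements of odd order. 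Consequently the ``shortcut'' you explicitly rule out is exactly what the paper uses for the incoming arc: an element $y$ of order $3$ in ${\bf N}_L(\langle h\rangle)\cong C\rtimes 3$ satisfies $[y,h]\in\langle h\rangle$, hence $[y,_2h]=1$, and $y\in\Omega$ because $3\in\pi_1$. For the outgoing arc the paper takes the odd-order element $\mathrm{diag}(c,c,c^{-2})$ with $c$ a generator of $\mathbb{F}_q^{\times}$, for which $[h,\cdot\,]$ is diagonal and the second commutator visibly vanishes.

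Moreover, the concrete candidates you do name cannot produce the arc $x\mapsto_2 h$. That arc means $[x,h]\in{\bf C}_L(h)$, and since $h$ acts irreducibly on $\mathbb{F}_q^3$, ${\bf C}_L(h)$ is the Singer torus, whose nontrivial elements have no eigenvalue $1$ over $\mathbb{F}_q$. If $x$ is a transvection, then $[x,h]=x^{-1}x^{h}$ is a product of two transvections and therefore fixes a nonzero vector of $\mathbb{F}_q^3$; so $[x,h]\in{\bf C}_L(h)$ would force $[x,h]=1$, i.e.\ $x\in{\bf C}_L(h)$, impossible for an involution inside an odd-order torus. Thus no transvection gives this arc, and for ``short products of transvections'' you have neither specified an element nor verified anything, so the proposal as written does not close. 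The repair is the one indicated above: abandon the even-order requirement and take $x,y$ of odd order in the maximal torus of order $(q-1)^2/\gcd(3,q-1)$ and in ${\bf N}_L(\langle h\rangle)$, respectively, both of which lie in $\Omega$.
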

\begin{proof}
Suppose first  $G=L$ and $q\ge 8$. By Theorem~\ref{theorem111}, all elements of even order of $G$ are in the same connected component, $\Omega$ say, of $\Gamma_1(G)$. In particular, we use
the prime graph of $G$ to deduce properties of the commuting graph of $G$, see Corollary~\ref{cor}.  From~\cite{Williams}, $\Pi(G)$ has two connected components
$$\pi\left(q(q-1)(q^2-1) \right)\hbox{ and } \pi\left(\frac{q^2+q+1}{\gcd(3,q-1)}\right).$$ 

Using Lemma~\ref{l:1}, choose $a\in\mathbb{F}_q$ with $T^3+aT+1$ irreducible over $\mathbb{F}_q$. Now, consider the element
\[
g=
\left[
\begin{array}{ccc}
0&0&1\\
1&0&a\\
0&1&0
\end{array}
\right]\in\mathrm{PSL}_3(q).
\]
(We are using square brackets to denote elements in the projective special linear group.)
Observe that $g$ has order a divisor of $(q^2+q+1)/\gcd(3,q-1)$, because $T^3+aT+1$ is irreducible over $\mathbb{F}_q$ and because $g$ is the projection in $\mathrm{SL}_3(q)/{\bf Z}(\mathrm{SL}_3(q))=\mathrm{PSL}_3(q)$ of the companion matrix of $T^3+aT+1$.

Now, let $c$ be a generator of the multiplicative group of $\mathbb{F}_q$ and let
\[
h=
\left[
\begin{array}{ccc}
c&0&0\\
0&c&0\\
0&0&c^{-2}
\end{array}
\right]\in\mathrm{PSL}_3(q).
\]
Observe that $h\ne 1$. Indeed, $h=1$ implies $c=c^{-2}$, that is, $c^3=1$. Since $c$ has order $q-1$, we deduce $q-1\le 3$, contradicting the fact that we are assuming $q\ge 8$. A direct computations gives
$$[g,h]=
\left[
\begin{array}{ccc}
1&0&0\\
0&c^3&0\\
0&0&c^{-3}
\end{array}
\right].
$$
Since $[g,h]$ commutes with $h$, we deduce $[g,_2h]=1$.

We have shown that, for every $g\in G$ having order a divisor of $(q^2+q+1)/\gcd(3,q-1)$, there exists $h\in \Omega$ with $g\mapsto_2h$. Now, ${\bf N}_G(\langle g\rangle)=C\rtimes D$, where $C={\bf C}_G( g)$ is cyclic of order $(q^2+q+1)/\gcd(3,q-1)$ and $D$ is cyclic of order $3$. In particular, there exists $h\in {\bf N}_G(\langle g\rangle)$ with ${\bf o}(h)=3$. As $3\in \pi(q(q-1)(q^2-1))$, we have $h\in \Omega$. Moreover, $[h,g]\in \langle g\rangle$ and hence $[h,_2g]=1$. Thus $\Gamma_2(G)$ is strongly connected by Corollary~\ref{cor}.

When $L<G$ and $q\ge 8$, from the fact that $\Gamma_2(L)$ is strongly connected and from the proof of Corollary~1.4 in~\cite{LS}, it follows that $\Gamma_2(G)$ is strongly connected.

When $q=4$, we have checked the veracity of the statement with the invaluable help of a computer, using the computer algebra system magma~\cite{magma}.
\end{proof}
\begin{proposition}Let $q=2^f$ and let $G$ be an almost simple group having socle $L\in \{G_2(q),E_6(q)\}$. Suppose $f\ge 3$ is odd when $L=G_2(q)$. Then $\Gamma_2(G)$ is strongly connected.
\end{proposition}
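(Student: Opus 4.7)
The plan is to adapt the strategy of Proposition~\ref{propo:SL}. First, as in that proof, an application of the proof of Corollary~1.4 of~\cite{LS} allows us to reduce to the case $G=L$. Since neither $G_2(q)$ (with $q$ as in the hypothesis) nor $E_6(q)$ appears in the exceptional list of Theorem~\ref{theorem111}, the commuting graph of $L$ has a unique connected component $\Omega$ containing all elements of even order. By Corollary~\ref{cor} it then suffices to show that, for each connected component $\psi$ of $\Pi(L)$ with $2\notin\psi$ and each Hall $\psi$-subgroup $H$ of $L$, there exist $h\in H\setminus\{1\}$ and $x,y\in\Omega$ with $x\mapsto_2 h$ and $h\mapsto_2 y$.

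For $L=G_2(q)$ with $q=2^f$ and $f\ge 3$ odd, using~\cite{Williams,KoMa} the odd components of $\Pi(L)$ are $\pi(q^2+q+1)$ and $\pi(q^2-q+1)$, and the corresponding Hall subgroups are the cyclic maximal tori $T_+$ and $T_-$ of $L$ of orders $q^2+q+1$ and $q^2-q+1$ respectively. The key structural observation is that $T_+$ embeds in $L$ as a Singer torus of a subgroup $\mathrm{SL}_3(q)\le L$, while $T_-$ embeds as a Singer torus of a subgroup $\mathrm{SU}_3(q)\le L$. Taking a generator $g$ of $T_\varepsilon$, we then transplant the argument of Proposition~\ref{propo:SL} verbatim into the relevant classical subgroup: it produces an element $h$ of even order with $g\mapsto_2 h$, and an element $w\in N_L(\langle g\rangle)$ of order $3$ (coming from the Galois normaliser of the Singer torus) which lies in $\Omega$ since $3\mid q^2-1$ places $3$ in the even component of $\Pi(L)$, and which satisfies $w\mapsto_2 g$.

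For $L=E_6(q)$ with $q=2^f$, by~\cite{Williams,KoMa} the two odd components of $\Pi(L)$ are $\pi((q^6+q^3+1)/\gcd(3,q-1))$ and $\pi(q^4-q^2+1)$, and the Hall $\psi$-subgroups are cyclic subgroups lying in the corresponding maximal tori. The same strategy applies: one identifies a classical Lie-type subgroup of $L$ containing each Hall subgroup as a Singer-like torus, for instance an $A_2(q^3)$-type subgroup for the first component and a subgroup of type ${}^3D_4(q)$ (or unitary type over $\mathbb{F}_{q^2}$) for the second. Rerunning the construction of Proposition~\ref{propo:SL} over $\mathbb{F}_{q^3}$, respectively $\mathbb{F}_{q^2}$, inside the chosen subgroup produces both the even-order element $h$ with $g\mapsto_2 h$ and the field-automorphism element $w$ of order $3$ with $w\mapsto_2 g$.

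The main technical obstacle is the subgroup-structure bookkeeping: for each odd Hall subgroup of $L$ one must exhibit a classical Lie-type subgroup of $L$ containing that Hall subgroup as a Singer-like torus, and verify that the elements $h$ and $w$ produced by the matrix construction of Proposition~\ref{propo:SL} inside this subgroup are non-trivial and of even (respectively of order $3$) order when viewed in $L$, so that they indeed lie in $\Omega$. Both points follow from the standard description of the reductive subgroups of the exceptional groups $G_2(q)$ and $E_6(q)$, together with the observation that $3\in\pi(q^2-1)$ always belongs to the even component of $\Pi(L)$.
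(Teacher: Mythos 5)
Your core strategy coincides with the paper's: reduce to $G=L$ via the proof of Corollary~1.4 of \cite{LS}, invoke Theorem~\ref{theorem111} and Corollary~\ref{cor}, embed the isolated Hall subgroup into a subgroup of type $\mathrm{SL}_3(q)$ (for $G_2(q)$, via \cite{bray}) or $\mathrm{PSL}_3(q^3)$ (for $E_6(q)$, via \cite[Table~5.1]{LSS}), and then feed it to Proposition~\ref{propo:SL}. That part is correct and is exactly what the paper does.

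However, your list of odd components of the prime graph is wrong in both cases, and this matters for how much you think you have to prove. For $G_2(q)$ with $q=2^f$ and $f$ odd we have $q\equiv 2\pmod 3$, so $3$ divides both $q+1$ and $q^2-q+1$; hence $\pi(q^2-q+1)$ is \emph{not} isolated but sits inside the component of $2$ (the paper, following \cite{Williams,KoMa}, records the components as $\pi\bigl(q(q^2-1)(q^3+1)\bigr)$ and $\pi(q^2+q+1)$, and $q^2-q+1$ divides $q^3+1$). Similarly, for $E_6(q)$ the only odd component is $\pi\bigl((q^6+q^3+1)/\gcd(3,q-1)\bigr)$; $\pi(q^4-q^2+1)$ is not a separate component. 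Fortunately the spurious components you introduce do not exist, so their omission from a correct proof is harmless; but your proposed treatment of them would not survive scrutiny if they were real: the construction of Proposition~\ref{propo:SL} is specific to a Singer torus of order $q^2+q+1$ in $\mathrm{SL}_3(q)$ and does not ``transplant verbatim'' to a $\Phi_{12}$-torus of ${}^3D_4(q)$ or to $\mathrm{SU}_3(q)$, where the relevant torus order and normalizer structure are different. One further small slip: the element $h$ produced by Proposition~\ref{propo:SL} with $g\mapsto_2 h$ is the diagonal element $\mathrm{diag}(c,c,c^{-2})$ of \emph{odd} order dividing $q-1$ (resp.\ $q^3-1$), not of even order; it lies in $\Omega$ because the primes dividing its order belong to $\pi_1$, not because it is a $2$-element. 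With the component bookkeeping corrected, your argument reduces to the paper's.
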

\begin{proof}
Assume first $G=L$. In view of Corollary~\ref{cor}, we may assume that $\Pi(G)$ is disconnected. By Theorem~\ref{theorem111}, all elements of even order of $G$ are in the same connected component, $\Omega$ say, of $\Gamma_1(G)$.

Suppose  $G=L=E_6(q)$.  From~\cite{Williams}, $\Pi(G)$ has two connected components
$$\pi\left(q(q^5-1)(q^8-1)(q^{12}-1) \right)\hbox{ and } \pi\left(\frac{q^6+q^3+1}{\gcd(3,q-1)}\right).$$ 
From~\cite[Table~5.1]{LSS}, $G$ contains a reductive subgroup $H\cong\mathrm{PSL}_3(q^3)$. Let $g\in G$ having order a divisor of $(q^6+q^3+1)/\gcd(3,q-1)$. Now, $g$ is conjugate to an element of $H$. Therefore, replacing $g$ with this conjugate, we may suppose that $g\in H$. From Proposition~\ref{propo:SL}, there exists $h_1,h_2\in H\cap \Omega$ with $h_1\mapsto_2 g\mapsto_2 h_2$. Therefore $\Gamma_2(G)$ is strongly connected by Corollary~\ref{cor}.

Suppose next $G=L=G_2(q)$.  From~\cite{Williams}, $\Pi(G)$ has two connected components
$$\pi\left(q(q^2-1)(q^3+1) \right)\hbox{ and } \pi\left(q^2+q+1\right).$$ 
From~\cite[Table~8.30]{bray}, $G$ contains a maximal subgroup $H\cong\mathrm{SL}_3(q).2$. Let $g\in G$ having order a divisor of $q^2+q+1$. Now, $g$ is conjugate to an element of $H$. Therefore, replacing $g$ with this conjugate, we may suppose that $g\in H$. From Proposition~\ref{propo:SL}, there exists $h_1,h_2\in H\cap \Omega$ with $h_1\mapsto_2 g\mapsto_2 h_2$. Therefore $\Gamma_2(G)$ is strongly connected by Corollary~\ref{cor}.

When $L<G$, from the fact that $\Gamma_2(L)$ is strongly connected and from the proof of Corollary~1.4 in~\cite{LS},   $\Gamma_2(G)$ is strongly connected.
\end{proof}

\section{Sporadic groups}\label{sec:sporadic}
In this section we deal with all sporadic simple groups, except for the Monster group. For the rest of this paper, we use the notation from~\cite{atlas}.

Let $G$ be a sporadic simple group. Using~\cite[Lemma~$6.1$]{mp}, we see that the commuting graph of $G$ has a unique connected component, $\Omega$ say, containing all elements having even order. Let $\pi_1$ be the connected component of the prime graph $\Pi(G)$ with $2\in \pi_1$. Observe that  $\Omega$ contains all the elements whose order is divisible by a prime $p \in \pi_1$. 
	
	Except for the Monster group, where we need further considerations, we use two strategies to prove that $\Gamma_2(G)$ is strongly connected. First, we look at the prime graph $\Pi(G)$ and, for every prime $p\in \pi(G)\setminus \pi_1$, we consider an element $g \in G$ with ${\bf o}(g)=p$. Then we take a maximal subgroup $M \leq G$ containing $g$ (this can be  done using~\cite{atlas}). Then, two things can happen.
	\begin{enumerate}
		\item $\Gamma_2(M)$ is strongly connected. In this case, there exist $z_1,z_2 \in M \cap \Omega$ such that 
				$z_1 \rightarrow_2 g \rightarrow_2 z_2.$
		\item $\Gamma_2(M)$ is not strongly connected. In this case, we complete our argument with the aid of a  computer calculation. In all cases, we are able to find an involution $z$ of $G$ such that $g\rightarrow_2 z$ with a random search. On the other hand, using~\cite{atlas}, we see that in all cases there exists $z' \in \Omega \cap {\bf N}_G(\langle g \rangle)$. In particular, $[z',g] \in \langle g \rangle$, implying that $z'\mapsto_2g$.
	\end{enumerate}
	
We collect all the information we need to apply this strategy in Table~\ref{tableConnComp}. Indeed, for each of the 26 sporadic simple groups, we are recording in the second column the connected components of $\Pi(G)$ different from $\pi_1$. (We use~\cite{KoMa} for this information.) Moreover, for each prime $p$ in a connected component appearing in the second column, we are reporting (when it exists) in the third column a maximal subgroup $M$ of $G$ containing elements having order $p$ and with $\Gamma_2(M)$ strongly connected. 		
	
Now, for almost simple groups having socle a sporadic simple group different from the Monster, the proof of Theorem~\ref{main}  follows from Table~\ref{tableConnComp} and from a few computer computations. The only reason that the Monster group is an exception here, it is due to its enormous size, which makes it impossible to do any reasonable computation with it. Since the argument is very repetitive, we report here only the proof that $\Gamma_2(M_{22})$ and $\Gamma_2(M_{23})$ are strongly connected. We start with $G=M_{22}$. Observe that we only need to discuss the primes $p\in \{5,11\}$. For $p=5$, $G$ contains a maximal subgroup $M$ isomorphic to $2^4:\mathrm{Sym(5)}$. In particular, $G$ contains a subgroup isomorphic to $\mathrm{Sym}(5)$. Now, $\Gamma_2(\mathrm{Sym}(5))$ is strongly connected by our results on alternating groups. For $p=11$, $G$ contains a maximal subgroup $M$ isomorphic to $\mathrm{PSL}_2(11)$. Now, $\Gamma_2(\mathrm{PSL}_2(11))$ is strongly connected by our results on the projective special linear groups (observe that the largest power of $2$ dividing $(11+1)/2=6$ is $2$). 
Assume now $G=M_{23}$. Observe that we only need to discuss the primes $p\in \{11,23\}$. For $p=11$, $G$ contains a maximal subgroup $M$ isomorphic to $M_{22}$. Now, $\Gamma_2(M_{22})$ is strongly connected from above. When $p=23$, $G$ has no maximal subgroup $M$ with $\Gamma_2(M)$ strongly connected and hence with a computer computation we have found (by a random search) an involution $z$ with $g\mapsto_2z$. Therefore, also in this case, $\Gamma_2(G)$ is strongly connected.

	\begin{table}[!ht]\centering
		\begin{tabular}{ccl}
			\toprule[1.5pt]
			Group & Connected components & Maximal subgroups\\
			\midrule[1.5pt]
			$\mathrm{M}_{11}$& $\{5\}$&$\mathrm{S}_5$     \\ 
							& $\{11\}$ & $\mathrm{L}_2(11)$\\
			\hline 
			$\mathrm{M}_{12}$ & $\{11\}$ & $\mathrm{M}_{11}$ \\
			\hline 
			$\mathrm{M}_{22}$ & $\{5\}$  & $2^4:\mathrm{S}_5$				  \\
			                  & $\{7\}$  & $\mathrm{A}_7$ \\
			                  & $\{11\}$ & $\mathrm{L}_2(11)$ \\
			\hline 
			$\mathrm{M}_{23}$ & $\{11\}$ & $\mathrm{M}_{22}$ \\
							  & $\{23\}$ &  				\\
			\hline 
			$\mathrm{M}_{24}$ & $\{11\}$ & $\mathrm{M}_{23}$ \\
						      & $\{23\}$ & $\mathrm{M}_{23}$ \\
			\hline
			$\mathrm{J}_1$   & $\{7\}$  & 					\\
							 & $\{11\}$ & $\mathrm{L}_2(11)$ \\
							 & $\{19\}$ & 					\\
			\hline
			 $\mathrm{J}_2$  & $\{7\}$ &  $\mathrm{U}_3(3)$ \\
			\hline
			$\mathrm{J}_3$ & $\{17\}$ &  $\mathrm{L}_2(17)$ \\
						   & $\{19\}$ &  $\mathrm{L}_2(19)$ \\
			\hline
			$\mathrm{J}_4$ & $\{23\}$ & $ 2^{11}:\mathrm{M}_{23}$ \\
						   & $\{29\}$ & 					\\
						   & $\{31\}$ & $\mathrm{L}_2(32):2$ \\
						   & $\{37\}$ & \\
						   & $\{43\}$ & \\
			\hline 
			$\mathrm{HS}$  & $\{7\}$  & $\mathrm{M}_{22}$ \\
						   & $\{11\}$ & $\mathrm{M}_{22}$ \\
			\hline
			$\mathrm{McL}$ & $\{11\}$ & $\mathrm{M}_{11}$ \\
			\hline 
			$\mathrm{He}$  & $\{17\}$ & $\mathrm{S}_4(4):2$   \\
			\hline
			$\mathrm{Ru}$  & $\{29\}$ & 			\\
			\hline
			$\mathrm{Suz}$ & $\{11\}$ & $\mathrm{U}_5(2)$ \\
						   & $\{13\}$ & $\mathrm{G}_2(4)$ \\
			\hline
			$\mathrm{O'N}$ &$\{11\}$  & $\mathrm{M}_{11}$ \\
						   &$\{19\}$  & $\mathrm{L}_3(7):2$ \\
						   &$\{31\}$  &			\\
			\hline
			$\mathrm{Co}_3$&$\{23\}$  & $\mathrm{M}_{23}$ \\
			\hline
			$\mathrm{Co}_2$&$\{11\}$  & $\mathrm{HS}$ \\
						   &$\{23\}$  & $\mathrm{M}_{23}$ \\
			\hline
			$\mathrm{Fi}_{22}$&$\{13\}$ & $\mathrm{O}_7(3)$ \\
			\hline
			$\mathrm{HN}$  & $\{19\}$ & $\mathrm{U}_3(8):3$ \\
			\hline
			$\mathrm{Ly}$  & $\{31\}$ & $\mathrm{G}_2(5)$ \\
						   & $\{37\}$ & 			\\
						   & $\{67\}$ & 			\\
			\hline
			$\mathrm{Th}$  & $\{19\}$ & $\mathrm{U}_3(8):6$ \\
						   & $\{31\}$ & ${}^3\mathrm{D}_4(2):3$ \\
			\hline 
			$\mathrm{Fi}_{23}$&$\{17\}$ & $\mathrm{S}_8(2)$ \\
						    &$\{23\}$ & $2^{11}:\mathrm{M}_{23}$\\
			\hline
			$\mathrm{Co}_1$&$\{23\}$ & $\mathrm{Co}_2$ \\
			\hline
			$\mathrm{Fi}'_{24}$& $\{17\}$ & $\mathrm{Fi}_{23}$ \\
							& $\{23\}$  & $\mathrm{Fi}_{23}$ \\
							& $\{29\}$ & \\
			\hline
			$\mathrm{B}$ & $\{31\}$ & $\mathrm{Th}$ \\
						 & $\{47\}$ & \\
			\hline
			$\mathrm{M}$ & $\{47\}$ & $3^8.\mathrm{O}_8^{-}(3).2_3$ \\
						 & $\{59\}$ & $\mathrm{L}_2(59)$ \\
						 & $\{71\}$ & \\
			\bottomrule[1.5pt]
		\end{tabular}
		\caption{Sporadic groups and connected components of the prime graph not containing the prime $2$. Notation from~\cite{atlas}.}\label{tableConnComp}
	\end{table}
\section{The Monster group}
Let $G$ be the Monster group. From Table~\ref{tableConnComp} and from the arguments (and the notation) in the previous sections, to prove Theorem~\ref{main} it suffices to show that, given an element $x\in G$ with ${\bf o}(x)=71$, there exists $z\in \Omega$ with $x\mapsto_2 z$.

Let $\iota\in G$ be an involution in the conjugacy class $2A$ of $G$ and let $H={\bf C}_G(\iota)$. Now, $H$ is a maximal subgroup of $G$ and $H\cong 2.B$, where $B$ is the Baby Monster. Let $1_H^G$ be the permutation character for the action of $G$ on the set $\mathcal{I}$ of right cosets of $H$ in $G$. From~\cite[page~115]{MuSp}, we see that in this action $G$ has rank $9$ and, in the decomposition of $\pi$ as the sum of complex irreducible characters, we have
$$\pi=\chi_1+\chi_2+\chi_4+\chi_5+\chi_9+\chi_{14}+\chi_{21}+\chi_{34}+\chi_{35},$$
where we are labeling the irreducible complex characters of $G$ consistently with the notation in~\cite{atlas}.

As $H$ is the centralizer of the involution $\iota$, we may also identify $\mathcal{I}$ with the $G$-conjugacy class $\iota^G$. Using the notation in~\cite{atlas}, the conjugacy class $\mathcal{I}$ is $2A$. From~\cite{norton}, the subdegrees for the action of $G$ on $\mathcal{I}$ are
\begin{align*}
&i_0=1,\,
i_1=27143910000,\,
i_2=11707448673375,\,
i_3=2031941058560000,\,
i_4=91569524834304000,\\
i_5=&1102935324621312000,\,
i_6=1254793905192960000,\,
i_7=30434513446055706624,\,
i_8=64353605265653760000.
\end{align*}
We let $\mathcal{I}_0,\ldots,\mathcal{I}_8$, be the orbits of $H$ on $\mathcal{I}$ with $i_j=|\mathcal{I}_j|$. In particular, $\mathcal{I}_0=\{\iota\}$.

Let $x\in G$ be an element of order $71$, let $C=\langle x\rangle$ and let $$\mathcal{C}=\bigcup_{c\in C\setminus \{1\}}c^G.$$ We observe that $H,C$ and $\mathcal{C}$ satisfy Hypothesis~0-3 in~\cite[Section~8.1]{LS}. Indeed,
\begin{description}
\item[Hypothesis~0] $H\cap \mathcal{C}=\emptyset$, because the elements in $\mathcal{C}$ have order $71$ and the order of $H$ is 
relatively prime to $71$,
\item[Hypothesis~1]$\mathcal{C}=\{c^g\mid c\in C\setminus\{1\},g\in G\}$, directly from the definition of $\mathcal{C}$,
\item[Hypothesis~2]for each $g\in G\setminus{\bf N}_G(C)$, we have $C^g\cap C=1$, because $C$ has prime order,
\item[Hypothesis~3]${\bf N}_G(C)$ is a Frobenius group with Frobenius kernel $71$, indeed, $C$ is contained in a unique $G$-conjugacy class of maximal subgroups of $G$ and these maximal subgroups are isomorphic to $\mathrm{PSL}_2(71)$. Thus, $C\le {\bf N}_G(C)\le K$, with $K\cong \mathrm{PSL}_2(71)$. Therefore, ${\bf N}_G(C)={\bf N}_K(C)\cong 71:35$ is a Frobenius group having kernel $C$.
\end{description}

\begin{lemma}\label{lemma:monster}There exists an involution $z\in G$ with $[x,_2z]=1$. In particular, $\Gamma_2(G)$ is strongly connected. 
\end{lemma}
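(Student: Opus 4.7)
The strategy is to reformulate the condition $[x,_{2}z]=1$ for an involution $z$ into a combinatorial condition on the rank-$9$ action of $G$ on $\mathcal{I}=2A$, and then show existence by a character-theoretic count, using the decomposition of $\pi$ and the subdegree data already at hand.

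First, I would reduce. Take $z\in\mathcal{I}$. Since $z^{-1}=z$, a short computation gives
\[
[x,z]=x^{-1}z^{-1}xz=(x^{-1}zx)z=z^{x}z,
\]
so $[x,_{2}z]=[z^{x}z,z]=1$ if and only if $[x,z]$ commutes with $z$, equivalently, the two involutions $z$ and $z^{x}$ commute. Since $71\nmid|{\bf C}_G(z)|=|2.B|$, we cannot have $z^{x}=z$, so $zz^{x}$ is a product of two distinct $2A$-involutions; by Norton's classification~\cite{norton} of such products in the Monster, it lies in one of the eight classes $2A,2B,3A,3C,4A,4B,5A,6A$ (these being precisely the $G$-classes realized on the eight non-trivial $H$-orbits of $\mathcal{I}$). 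Among these, only $2A$ and $2B$ consist of involutions. Hence the lemma reduces to exhibiting $z\in\mathcal{I}$ with $zz^{x}\in 2A\cup 2B$.

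Next, I would count. From~\cite{norton}, the subdegrees $i_{1}=27143910000$ and $i_{2}=11707448673375$ label the $H$-suborbits whose elements $z'$ satisfy $\iota z'\in 2A$ and $\iota z'\in 2B$ respectively, so they correspond to two $G$-orbitals $\mathcal{O}_{2A},\mathcal{O}_{2B}$ on $\mathcal{I}\times\mathcal{I}$, and I need
\[
N(x):=\#\{z\in\mathcal{I}:(z,z^{x})\in\mathcal{O}_{2A}\cup\mathcal{O}_{2B}\}>0.
\]
Since $\pi=1_{H}^{G}$ is multiplicity-free with nine constituents $\chi_{1},\chi_{2},\chi_{4},\chi_{5},\chi_{9},\chi_{14},\chi_{21},\chi_{34},\chi_{35}$, the adjacency operators of $\mathcal{O}_{2A}$ and $\mathcal{O}_{2B}$ act as scalars on each isotypic summand, so
\[
N(x)=\sum_{i}\bigl(\lambda_{2A,i}+\lambda_{2B,i}\bigr)\chi_{i}(x),
\]
where the eigenvalues $\lambda_{j,i}$ form the $P$-matrix of the rank-$9$ association scheme and are determined by the given subdegrees and the intersection numbers tabulated in~\cite{norton}.

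Finally, I would carry out the arithmetic: substitute the Monster character values on the class $71A$ (taken from~\cite{atlas}) into this sum and check directly that $N(x)>0$. Any such $z$ is an involution and therefore lies in $\Omega$, which gives the required arc $x\mapsto_{2}z$. The converse direction is immediate: ${\bf N}_G(\langle x\rangle)\cong 71{:}35$ contains an element $y$ of order $5$, which already lies in $\Omega$ and satisfies $[y,x]\in\langle x\rangle$, so $y\mapsto_{2}x$. Combined with the data in Table~\ref{tableConnComp} for the primes $47$ and $59$, Corollary~\ref{cor} then yields the strong connectivity of $\Gamma_{2}(G)$. The main obstacle is the numerical step: the relevant Monster character values on $71A$ are algebraic integers in $\mathbb{Q}(\zeta_{71})$ and the $P$-matrix entries are nontrivial rationals, but everything reduces to a finite, mechanical positivity verification.
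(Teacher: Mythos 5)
Your reduction is correct and is in substance the same first step as the paper's: for an involution $z$ one has $[x,z]=z^xz$, and $[x,_2z]=1$ precisely when $z$ and $z^x$ commute, i.e.\ when $zz^x$ lies in $2A\cup 2B$ (the only classes of involutions among the nine classes $1A,2A,2B,3A,3C,4A,4B,5A,6A$ realized by products of two $2A$-involutions, and $zz^x\neq 1$ since $71\nmid|2.B|$). Your identification of $i_1$ and $i_2$ with the suborbits $X_{2A}$ and $X_{2B}$ matches the class-constant computation in the paper, and the spectral formula $N(x)=\operatorname{tr}\bigl((A_{2A}+A_{2B})P(x)\bigr)=\sum_i(\lambda_{2A,i}+\lambda_{2B,i})\chi_i(x)$ is valid because $1_H^G$ is multiplicity-free of rank $9$. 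So the skeleton is sound, and your counting mechanism is genuinely different from the paper's: the paper never touches the eigenvalues of the scheme, but instead bounds from below the number of $H$-cosets met by $\mathcal{C}=\bigcup_{c\in C\setminus\{1\}}c^G$ using the character-theoretic inequality (8.4) of \cite{LS}, concludes that $\iota^{\mathcal{C}}$ is either all of $\mathcal{I}\setminus\{\iota\}$ or misses only $\mathcal{I}_1$, and then derives a size contradiction ($97239461114865275999>97239449407416602624$) from the explicit values $|X_Y|=m_Y|Y|/|\mathcal{I}|$ if $\iota^{\mathcal{C}}$ were to avoid both $X_{2A}$ and $X_{2B}$.

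The genuine gap is that the only non-routine content of the lemma --- the inequality $N(x)>0$ --- is never established. You defer it to ``a finite, mechanical positivity verification,'' but the inputs to that verification are not pinned down: the eigenvalues $\lambda_{2A,i},\lambda_{2B,i}$ are \emph{not} determined by the subdegrees and the decomposition of $\pi$ alone; one needs the intersection numbers of the rank-$9$ scheme (or an explicit character-sum formula for the $\lambda_{j,i}$), and you neither extract these from \cite{norton} nor verify that they are available there in usable form. Without the numerical step the argument proves nothing, since a priori the non-principal terms $\sum_{i\neq 1}(\lambda_{2A,i}+\lambda_{2B,i})\chi_i(x)$ could overwhelm the principal term $i_1+i_2$. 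A secondary point you should address if you complete the computation: the two classes of elements of order $71$ may a priori give different values of $N(x)$, and if positivity were to fail for the class of your chosen $x$ you would need to replace $x$ by a power lying in the other class (harmless for Corollary~\ref{cor}, and exactly what the paper's conclusion $x^i\mapsto_2 z$ reflects, but it should be said). By contrast, the paper's route deliberately sidesteps the eigenvalue data: inequality (8.4) of \cite{LS} needs only the character table, the decomposition of $\pi$, and the subdegrees, all of which are explicitly sourced, and the resulting bound \eqref{align1} is strong enough to force the conclusion by pure counting.
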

\begin{proof}
We start by proving an auxiliary result concerning $\iota^{\mathcal{C}}=\{\iota^g\mid g\in \mathcal{C}\}$. We claim that
\begin{itemize}
\item either $\iota^{\mathcal{C}}=\mathcal{I}\setminus\{\iota\}$, 
\item or $\iota^{\mathcal{C}}=\mathcal{I}\setminus(\{\iota\}\cup \mathcal{I}_1)$.
\end{itemize}
We are not able to resolve the ambiguity between these two cases, but this is not a concern for our application. 
\\
Observe that $H\mathcal{C}=\{hg\mid h\in H,g\in \mathcal{C}\}$ is a union of right $H$-cosets. We write $|H\mathcal{C}:H|$ to denote the number of right $H$-cosets in $H\mathcal{C}$. 
We claim that $H\mathcal{C}=H\mathcal{C}H$. Indeed, $H\mathcal{C}\subseteq H\mathcal{C}H$. Conversely, if $h_1ch_2\in H\mathcal{C}H$ with $h_1,h_2\in H$ and $c\in\mathcal{C}$, then $h_1ch_2=h_1h_2c^{h_2}\in H\mathcal{C}$. This shows that $H\mathcal{C}$ is a union of $H$ right cosets.
\\
 As $H={\bf C}_G(\iota)$,  we have 
$|H\mathcal{C}:H|=|\{\iota^c\mid c\in\mathcal{C}\}|.$ Moreover, since $H\mathcal{C}=H\mathcal{C}H$, the set $\{\iota^c\mid c\in\mathcal{C}\}$ is a union of $H$-orbits for the action of $H$ on $\mathcal{I}$. Therefore, to prove our claim, it suffices to show that
$$|H\mathcal{C}:H|\in \{|G:H|-1,|G:H|-1-i_1\}.$$ 

As in~\cite[Section~8.1]{LS}, we let 
$$\Delta(H,\mathcal{C})$$
be the auxiliary graph having vertex set $\mathcal{C}$ and where two vertices $x,y\in\mathcal{C}$ are declared to be adjacent if and only if $yx^{-1}\in H$. Observe that $\Delta(H,\mathcal{C})$ is undirected, because if $yx^{-1}\in H$, then $xy^{-1}=(yx^{-1})^{-1}\in H$.

We now describe a relation between the set $H\mathcal{C}$ and the connected components of $\Delta(H,\mathcal{C})$.
Let $\Delta_1,\ldots,\Delta_c$ be the connected components of $\Delta(H,\mathcal{C})$. For each $i\in \{1,\ldots,c\}$, let $x_i\in \Delta_i$. We claim that $$\Delta_i\subseteq Hx_i.$$ Indeed, for this end, let $y\in \Delta_i$. Since $\Delta_i$ is connected, there exists a path $x_i=p_1,p_2,\ldots,p_\ell=y$ in $\Delta_i$ with $p_j$ adjacent to $p_{j+1}$ for every $j\in \{1,\ldots,\ell-1\}$. Now, $p_2x_i^{-1}=p_2p_1^{-1}\in H$ and hence $Hx_i=Hp_2$. Therefore, arguing inductively on $\ell$, we have $Hx_i=Hy$ and so $y\in Hx_i$.

The cosets $Hx_1,\ldots,Hx_c$ are distinct. In fact, if $Hx_i=Hx_j$, then $x_ix_j^{-1}\in H$ and hence $x_i$ is adjacent to $x_j$ in $\Delta(H,\mathcal{C})$. Since $x_i\in \Delta_i$ and $x_j\in \Delta_j$ and since $\Delta_i$ and $\Delta_j$ are connected components of $\Delta(H,\mathcal{C})$, we get $\Delta_i=\Delta_j$ and $x_i=x_j$. 

Summing up, we have
$$H\mathcal{C}=Hx_1\cup\cdots \cup Hx_c$$
and hence $|H\mathcal{C}:H|=c$ is the number of connected components of $\Delta(H,\mathcal{C})$.

Section~8.1 in~\cite{LS} develops a method for giving a lower bound on $c$ using only the character table of the group $G$ and the decomposition of the permutation character $\pi=1_H^G$. Indeed,~\cite[(8.4)]{LS} gives
$$c\ge |G:H|\cdot \frac{(|C|-1)^2}{\sum_{\chi\in\mathrm{Irr}(G)}\frac{\langle\chi,\pi\rangle}{\chi(1)}(|C|\langle\chi_{|C},1\rangle-\chi(1))^2}.$$ 
In our case, using the decomposition of $\pi$ as the sum of irreducible characters and using the character table of $G$, we find
\begin{align}\label{align1}c\ge \frac{4020948952911201379068000}{41351}.\end{align}
Let $c'$ be the right hand side of this inequality. Now, if $c=|G:H|-1$, then $\iota^{\mathcal{C}}=\mathcal{I}\setminus\{\iota\}$. Therefore, suppose $c<|G:H|-1$.

 Therefore, $c$ is a sum of nontrivial subdegrees for the action of $G$ on the right cosets of $H$ in $G$, that is, for the action of $G$ on $\mathcal{I}$. However, as $$c' > |G:H|-1-i_2,$$ we obtain
$$c=|G:H|-1-27143910000=|G:H|-1-i_1$$
and hence $\iota^{\mathcal{C}}=\mathcal{I}\setminus (\{\iota\}\cup\mathcal{I}_1)$. This concludes the proof of our claim.

\medskip

Let $\mathcal{X}_1,\ldots,\mathcal{X}_{194}$ be the $G$-conjugacy classes and, for each $i$, let $x_i\in \mathcal{X}_i$. For any $G$-conjugacy class $\mathcal{X}_i$, we define
$$\overline{\mathcal{X}}_i=\sum_{g\in\mathcal{X}_i}g\in\mathbb{C}G,$$
where $\mathbb{C}G$ is the complex group algebra over $G$. More generally, for every subset $X$ of $G$, we define $\overline{X}=\sum_{x\in X}x\in\mathbb{C}[G]$. Then
$$\overline{\mathcal{X}}_i\cdot\overline{\mathcal{X}_j}=\sum_{v=1}^{194}a_{ijv}\overline{\mathcal{X}}_{v},$$
where $a_{ijv}\in \mathbb{N}$ are the class constants of $G$. There is a combinatorial interpretation of the $a_{ijv}$, which comes from the conjugacy class association scheme of $G$. Indeed,
$$a_{ijv}=|\{(a,b)\in\mathcal{X}_i\times\mathcal{X}_j\mid ab=x_v\}|.$$
The class constants can be computed using the character table of $G$. From~\cite[(3.9)]{Isaacs}, we have
$$\frac{|G|}{|\mathcal{X}_i||\mathcal{X}_j|}a_{ijv}=\sum_{\chi\in\mathrm{Irr}(G)}\frac{\chi(x_i)\chi(x_j)\chi(x_v^{-1})}{\chi(1)}.$$
Using the character table for $G$, we obtain
\begin{align}\label{eq:classconstants0}
\overline{\mathcal{I}}\cdot\overline{\mathcal{I}}&=|\mathcal{I}|\cdot\overline{1A}+27143910000\cdot\overline{2A}+196560\cdot\overline{2B}+920808\cdot\overline{3A}+3\cdot\overline{3C}+1104\cdot\overline{4A}+4\cdot\overline{4B}+5\cdot\overline{5A}+6\cdot\overline{6A},
\end{align}
where we are using the same labeling as in~\cite{atlas}. As $\iota\in\mathcal{I}$, from this it follows that
\begin{align}\label{eq:classconstants}
\iota\cdot\overline{\mathcal{I}}&=\overline{1A}+\overline{X}_{2A}+\overline{X}_{2B}+\overline{X}_{3A}+\overline{X}_{3C}+\overline{X}_{4A}+\overline{X}_{2B}+\overline{X}_{5A}+\overline{X}_{6A},
\end{align}
where $X_{2A}\subseteq 2A$,  $X_{2B}\subseteq 2B$, $X_{3A}\subseteq 3A$, $X_{3C}\subseteq 3C$, $X_{4A}\subseteq 4A$,  $X_{4B}\subseteq 4B$,  $X_{5A}\subseteq 5A$,  $X_{6A}\subseteq 6A$. 

Let $Y\in \{1A,2A,2B,3A,3C,4A,4B,5A,6A\}$ and let $m_Y\in\mathbb{N}$ be the class constant appearing in~\eqref{eq:classconstants0} in front of $\overline{Y}$. We claim that $|X_Y|=m_Y|Y|/|\mathcal{I}|$ (for instance, $|X_{4A}|=1104|4A|/|\mathcal{I}|$ and $|X_{6A}|=6|6A|/|\mathcal{I}|$). To this end, we double count the elements in the set $\{(a,b)\in\mathcal{I}\times \mathcal{I}\mid ab\in Y\}$. On one side, we have
$$|\{(a,b)\in\mathcal{I}\times \mathcal{I}\mid ab\in Y\}|=\sum_{y\in Y}|\{(a,b)\in\mathcal{I}\times \mathcal{I}\mid ab=y\}|=\sum_{y\in Y}m_Y=m_Y|Y|.$$
On the other side, let $a,a'\in\mathcal{I}$. Since $a,a'$ are in the same $G$-conjugacy class, there exists $g\in G$ with $a'=a^g$. In particular, 
$$\{b\in \mathcal{I}\mid ab\in Y\}^g=\{b^g\in\mathcal{I}^g\mid a^gb^g\in Y^g\}=\{b^g\in\mathcal{I}\mid a'b^g\in Y\}=\{b\in\mathcal{I}\mid a'b\in Y\}.$$
Therefore, applying this fact with $a=\iota$, we obtain
$$|\{(a,b)\in\mathcal{I}\times \mathcal{I}\mid ab\in Y\}|=|\mathcal{I}||\{b\in \mathcal{I}\mid \iota b\in Y\}|=|\mathcal{I}||X_Y|.$$
Thus $m_Y|Y|=|\mathcal{I}||X_Y|$ and hence $|X_Y|=m_Y|Y|/|\mathcal{I}|$.

From the previous paragraph and~\eqref{eq:classconstants}, we obtain
\begin{align*}
&|X_{2A}|=27143910000,&&|X_{2B}|=11707448673375,\\
 &|X_{3A}|=2031941058560000,&&|X_{3C}|=91569524834304000,\\
&|X_{4A}|=1102935324621312000, &&|X_{4B}|=1254793905192960000,\\
 &|X_{5A}|=30434513446055706624,&&|X_{6A}|=64353605265653760000.
\end{align*}

Now, let $$\overline{\iota^{\mathcal{C}}}=\sum_{x\in \iota^{\mathcal{C}}}x\in\mathbb{C}[G].$$
As $\iota^{\mathcal{C}}\subseteq\mathcal{I}$, from~\eqref{eq:classconstants}, we deduce 
$$\iota\cdot\overline{\mathcal{\iota^{\mathcal{C}}}}=
\overline{X'}_{2A}+\overline{X'}_{2B}+\overline{X'}_{3A}+\overline{X'}_{3C}+\overline{X'}_{4A}+\overline{X'}_{4B}+\overline{X'}_{5A}+\overline{X'}_{6A},$$
with $X'_{Y}\subseteq X_Y$ for every conjugacy class $Y$,
and
$$|\iota^{\mathcal{C}}|=|X'_{2A}|+|X'_{2B}|+|X'_{3A}|+|X'_{3C}|+|X'_{4A}|+|X'_{4B}|+|X'_{5A}|+|X'_{6A}|.$$

Suppose $X'_{2A}\ne \emptyset$ or $X'_{2B}\ne \emptyset$. In particular, there exists $g\in\mathcal{C}$ with $\iota\iota^{g}\in X'_{2A}\subseteq 2A$ when $X'_{2A}\ne \emptyset$ and $\iota\iota^{g}\in X'_{2B}\subseteq 2B$ when $X'_{2B}\ne \emptyset$. 
This shows that there exists $g\in \mathcal{C}$ such that $\iota \iota^g$ is an involution. Hence $\iota\iota^g=\iota^g\iota=[g,\iota]$ is an involution.  Therefore, $[g,\iota,\iota]=\iota^g\iota\cdot \iota\cdot \iota^g\iota\cdot \iota=1$, that is, $g\mapsto_2 \iota$. Since $\langle x\rangle$ and $\langle g\rangle$ are Sylow $71$-subgroups of $G$, there exists $m\in G$ and $i\in\mathbb{Z}$ with $x^i=g^m$. In particular, $x^i\mapsto_2 z$, where $z=\iota^m$ is an involution.

Suppose $X'_{2A}=X'_{2B}=\emptyset$.
From our earlier claim, we have
$\overline{\mathcal{I}}=\overline{\iota^{\mathcal{C}}}+\iota$ or $\overline{\mathcal{I}}=\overline{\iota^{\mathcal{C}}}+\overline{\mathcal{I}}_1+\iota.$ Assume first that $\overline{\mathcal{I}}=\overline{\iota^{\mathcal{C}}}+\iota$. Then
$$\iota\cdot\overline{\iota^{\mathcal{C}}}=\iota\cdot (\overline{\mathcal{I}}-\iota)=\iota\cdot\overline{\mathcal{I}}-1A=\overline{X}_{2A}+\overline{X}_{2B}+\overline{X}_{3A}+\overline{X}_{3C}+\overline{X}_{4A}+\overline{X}_{2B}+\overline{X}_{5A}+\overline{X}_{6A}.$$
In particular, $X'_{2A}=X_{2A}\ne\emptyset$ and $X'_{2B}=X_{2B}\ne\emptyset$, which contradicts $X'_{2A}=X'_{2B}=\emptyset$. Assume next that $\overline{\mathcal{I}}=\overline{\iota^{\mathcal{C}}}+\overline{\mathcal{I}_1}+\iota$. Then
\begin{align*}
|\iota\cdot\iota^{\mathcal{C}}|&=|\mathcal{I}\setminus(\mathcal{I}_1\cup\{\iota\})|=|\mathcal{I}|-1-i_1=97239461114865275999.
\end{align*}
On the other hand, since $X'_{2A}=X'_{2B}=\emptyset$, we have
\begin{align*}
|\iota\cdot \iota^{\mathcal{C}}|&=|X'_{3A}|+|X'_{3C}|+|X'_{4A}|+|X'_{2B}|+|X'_{5A}|+|X'_{6A}|\\
&\le |X_{3A}|+|X_{3C}|+|X_{4A}|+|X_{4B}|+|X_{5A}|+|X_{6A}|=97239449407416602624.
\end{align*}
However, this is a contradiction because
$$97239461114865275999>97239449407416602624.\qedhere$$
\end{proof}
\thebibliography{40}
\bibitem{aschbacher}M.~Aschbacher,
A condition for the existence of a strongly embedded subgroup,
\textit{Proc. Amer. Math. Soc.} \textbf{38} (1973), 509--511.
\bibitem{magma}  C. Bosma, J. Cannon, C. Playoust, The Magma algebra system. I. The user language, \textit{J. Symbolic Comput.} \textbf{24} (1997), 235--265.

\bibitem{bray}J.~N.~Bray, D.~F.~Holt, C.~M.~Roney-Dougal, \textit{The maximal subgroups of the low-dimensional finite classical groups}, Cambridge: Cambridge University Press, 2013.

\bibitem{cam} P.~J.~Cameron,  Graphs defined on groups. \textit{Int. J. Group Theory} \textbf{11} (2022), no. 2, 53--107. 

\bibitem{atlas} J.~H.~Conway, R.~T. Curtis, S.~P.~Norton, R.~A.~Parker and R.~A.~Wilson, An $\mathbb{ATLAS}$ of Finite Groups, Oxford University Press, Eynsham, 1985.
\bibitem{DLN}E.~Detomi, A.~Lucchini, D.~Nemmi, The Engel graph of a finite group, Forum Math. (2022) https://doi.org/10.1515/forum-2022-0070.
\bibitem{Isaacs}M.~I.~Isaacs, \textit{Character theory of finite groups}, Academic Press Inc., 1976.
\bibitem{KoMa}A.~S.~Kondrat'ev, V.~D.~Mazurov, Recognition of alternating groups of prime degree from their element orders, \textit{Siberian Math. J.} \textbf{41} (2000), 294--302.
\bibitem{LSS}M.~W.~Liebeck, J.~Saxl, G.~M.~Seitz, Subgroups of maximal rank in finite exceptional groups of Lie type, \textit{Proc. London Math. Soc. (3)} \textbf{65} (1992), 297--325.
\bibitem{LS}A.~Lucchini, P.~Spiga, The Engel graph of almost simple groups, to appear, \href{https://arxiv.org/abs/2205.14984}{ 	arXiv:2205.14984 [math.GR]}.
\bibitem{mp} G. L. Morgan, C. W. Parker, The diameter of the commuting graph of a finite group with trivial centre, \textit{J. Algebra} \textbf{393} (2013), 41--59.
\bibitem{MuSp}M.~Muzychuk, P.~Spiga, 
Finite primitive groups of small rank: symmetric and sporadic groups, \textit{J. Algebraic Combin.} \textbf{52} (2020), 103--136. 
\bibitem{norton}S.~P.~Norton, Anatomy of the Monster. I. The atlas of finite groups: ten years on (Birmingham, 1995), 198--214, London Math. Soc. Lecture Note Ser., 249, Cambridge Univ. Press, Cambridge, 1998
\bibitem{Williams}J.~S.~Williams, Prime graph components of  finite groups, \textit{J. Algebra} \textbf{69} (1981), 487--513.
\end{document}